\newtheorem{theorem}{Theorem}[section]
\newtheorem{definition}[theorem]{Definition}
\newtheorem{lemma}[theorem]{Lemma}
\newenvironment{proof}[1][Proof]{\noindent\textbf{#1.} }
{\hfill \ \rule{0.5em}{0.5em}}
\newcommand{\points}{\mathcal{P}}
\newcommand{\lines}{\mathcal{L}}
\newcommand{\incidence}{\mathcal{I}}
\newcommand{\fq}{\mathbb{F}_q}
\newcommand{\intq}{\mathbb{Z}_{q^2-1} }
\newcommand\blfootnote[1]{%
  \begingroup
  \renewcommand\thefootnote{}\footnote{#1}%
  \endgroup
}
\begin{document}

\title{\vspace{-1cm}Sidon sets and graphs without 4-cycles}
\author{Michael Tait\thanks{Department of
Mathematics, University of California, San Diego, CA, USA, mtait@math.ucsd.edu} \and Craig Timmons\thanks{Department of
Mathematics, University of California, San Diego, CA, USA, ctimmons@math.ucsd.edu} }

\maketitle
\blfootnote{\textsuperscript{1}Both authors were partially supported by NSF Grant DMS-1101489 through Jacques Verstra\" ete }

\vspace{-1cm}

\begin{abstract}
The problem of determining the maximum number of edges in an $n$-vertex graph that does not contain a 4-cycle has a rich history in extremal graph theory.  Using Sidon sets constructed by Bose and Chowla, for each odd prime power $q$ we construct a graph with $q^2 - q - 2$ vertices that does not contain a 4-cycle and has at least $\frac{1}{2}q^3 - q^2 - O(q^{3/4})$ edges.  This disproves a conjecture of Abreu, Balbuena, and Labbate concerning the Tur\'{a}n number $\textup{ex}(q^2 - q - 2 , C_4)$.    
\end{abstract}

\section{Introduction}

Let $F$ be a graph.  The \emph{Tur\'{a}n number} of $F$, denoted $\textup{ex}(n, F)$, is the maximum number of edges in an $n$-vertex graph that does not contain $F$ as a subgraph.  Determining $\textup{ex}(n , F)$ for different graphs $F$ is one of the central problems in extremal combinatorics.  One of the most studied cases is the Tur\'{a}n number of $C_4$, the cycle on four vertices.  It is known that $\textup{ex}(n , C_4) \leq \frac{1}{2} n^{3/2} + o(n^{3/2})$ for every $n \geq 1$ (see \cite{bol}).  It is more difficult to construct $n$-vertex graphs without 4-cycles that have $\frac{1}{2}n^{3/2} + o(n^{3/2})$ edges.  Using polarity graphs of projective planes Brown \cite{b}, Erd\H{o}s, R\'{e}nyi, and S\'{o}s \cite{ers} independently proved that for each prime power $q$, $\textup{ex}( q^2  + q + 1 , C_4) \geq \frac{1}{2}q (q+1)^2$.  To define polarity graphs we need some terminology from finite geometry.  

Let $\points$ and $\lines$ be disjoint sets and $\incidence \subset \points \times \lines$.  Elements of $\points$ are called \emph{points}, elements of $\lines$ are called \emph{lines}, and $\incidence$ defines an incidence relation on 
the pair $( \points , \lines)$.  Let $\pi : \points \cup \lines \rightarrow \points \cup \lines$ be a bijection such that 
$\pi ( \points ) = \lines$, $\pi ( \lines ) = \points$, $\pi^2 = \textup{id}$, and for all 
$p \in \points$ and $l \in \lines$ we have $(p , l) \in \incidence$ if and only if $( \pi (l) , \pi (p ) ) \in \incidence$.  The map $\pi$ is a \emph{polarity} of the geometry $( \points , \lines , \incidence )$.  The \emph{polarity graph} $G_{\pi}$ of 
the geometry $(\points , \lines , \incidence )$ with respect to $\pi$ is the graph with vertex set $V(G_{\pi}) = \points$ and 
edge set 
\[
E(G_{\pi}) = \{  \{p,q \} : p , q \in \points, p \neq q, \mbox{and}~ (p , \pi (q) ) \in \incidence \}.
\]
A point $p$ is an \emph{absolute point} of $\pi$ if $( p , \pi (p) ) \in \incidence$.  

If $( \points , \lines , \incidence)$ is a finite projective plane of order $q$ and $\pi$ is an orthogonal polarity (one with exactly $q+1$ absolute points) then the polarity graph will have $q^2 + q  +1 $ vertices, $\frac{1}{2}q (q+1)^2$ edges, and will not contain a 4-cycle.  The constructions of \cite{b} and \cite{ers} are polarity graphs of the projective plane $PG(2 , \mathbb{F}_q )$ where $q$ is a prime power and $\mathbb{F}_q$ is the finite field with $q$ elements.  The polarity is the orthogonal polarity sending the point $(x_0 , x_1 , x_2)$ to the line $[x_0 , x_1 , x_2]$ and vice versa (see \cite{bol} or \cite{fs} for more details).  These polarity graphs show that 
for any prime power $q$, $\textup{ex}( q^2 +q + 1 , C_4) \geq \frac{1}{2}q ( q + 1)^2$.      

The exact value of $\textup{ex}(n, C_4)$ was determined using computer searches 
(\cite{c}, \cite{y}) for all $n \leq 31$.    
F\"{u}redi \cite{fu} proved that whenever $q \geq 13$ is a prime power, 
$\textup{ex}(q^2 + q + 1 , C_4) \leq \frac{1}{2} q( q+1)^2$ thus we get the exact result $\textup{ex}(q^2 + q + 1, C_4) = \frac{1}{2}q(q+1)^2$ for all prime powers $q \geq 13$.  It was also shown in \cite{fu} that the only graphs with $q^2  +q + 1$ vertices and $\frac{1}{2} q ( q + 1)^2$ edges that do not contain 4-cycles are orthogonal polarity graphs of finite projective planes.  Along with the constructions of \cite{b} and \cite{ers}, the results of F\"{u}redi are the most important contributions to the 4-cycle Tur\'{a}n problem.  Recently Firke, Kosek, Nash, and Williford \cite{fknw} proved that for even $q$, 
$\textup{ex}(q^2  + q , C_4) \leq \frac{1}{2}q (q+1)^2 - q$.  If $q$ is a power of two then we have the exact result 
$\textup{ex}(q^2 + q , C_4) = \frac{1}{2}q (q+1)^2 - q$.  The lower bound in this case comes from taking an orthogonal polarity graph of a projective plane of order $q$ and removing a vertex of degree $q$.  

The results we have mentioned so far describe all of the cases in which an exact formula for $\textup{ex}(n, C_4)$ is known.  Using known results on densities of primes one has the asymptotic result $\textup{ex}(n,C_4)  = \frac{1}{2}n^{3/2} + o(n^{3/2})$ but there are still many open problems concerning graphs with 4-cycles.  For example, Erd\H{o}s and Simonovits \cite{es} conjectured that if $G$ is any $n$-vertex graph with 
$\textup{ex}(n , C_4) + 1$ edges then $G$ must contain at least $n^{1/2} + o( n^{1/2})$ copies of $C_4$.  
For more on the Tur\'{a}n problem for $C_4$ and other bipartite Tur\'{a}n problem we refer the reader to the excellent survey 
of F\"{u}redi and Simonovits \cite{fs}.   

While investigating adjacency matrices of polarity graphs, Abreu, Balbuena, and Labbate \cite{abl} were able to find subgraphs of a polarity graph that have many edges.  By deleting such a subgraph, Abreu et.\ al.\ \cite{abl} proved that for any prime power $q$, 
\[
\textup{ex}( q^2 - q - 2 , C_4) \geq 
\left\{
\begin{array}{ll}
\frac{1}{2}q^3 - q^2 - \frac{q}{2} + 1 & \mbox{if $q$ is odd,} \\ 
\frac{1}{2}q^3 - q^2 & \mbox{if $q$ is even}.
\end{array}
\right.
\]
They conjectured that these bounds are best possible.  Our main result shows that when $q$ is an odd prime power, this lower bound can be improved by $\frac{q}{2} - O(q^{3/4})$.        

\begin{theorem}\label{main}
If $q$ is an odd prime power then 
\[
\textup{ex}( q^2 - q - 2 , C_4) \geq \frac{1}{2}q^3 - q^2 - O ( q^{3/4} ).
\]
\end{theorem}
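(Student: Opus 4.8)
The plan is to turn the Bose--Chowla Sidon set into a sum graph and then trim it to the correct order while giving up as few edges as possible. Fix a generator $\theta$ of $\mathbb{F}_{q^2}^{*}$ and, for each $c\in\fq$, let $b_c$ be the discrete logarithm of $\theta+c$ to base $\theta$, so that $B=\{b_c:c\in\fq\}\subseteq\mathbb{Z}_{q^2-1}$ is a Sidon set of size $q$. I would let $G$ be the \emph{sum graph} on vertex set $\mathbb{Z}_{q^2-1}$ in which $x\sim y$ exactly when $x\neq y$ and $x+y\in B$.

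First I would verify that $G$ is $C_4$-free. If $x,y,z,w$ formed a $4$-cycle, then $x+y,\ y+z,\ z+w,\ w+x$ all lie in $B$ and satisfy $(x+y)+(z+w)=(y+z)+(w+x)$; since $B$ is Sidon this forces $\{x+y,z+w\}=\{y+z,w+x\}$, and either matching collapses to $x=z$ or $y=w$, contradicting that the four vertices are distinct. Next I would pin down $|E(G)|$ exactly. The neighbours of $x$ are the elements $b-x$ with $b\in B$, except in the degenerate case $b=2x$, so $\deg(x)=q-[\,2x\in B\,]$ and the only vertices of degree $q-1$ are the $2e_B$ solutions of $2x\in B$, where $e_B=\#\{b\in B:b\text{ even}\}$. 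Since $b_c$ is even iff $\theta+c$ is a square in $\mathbb{F}_{q^2}$, writing $\chi$ for the quadratic character of $\mathbb{F}_{q^2}$ and using $\chi=\eta\circ N$ (with $\eta$ the quadratic character of $\fq$ and $N$ the norm) gives
\[
 e_B=\sum_{c\in\fq}\frac{1+\chi(\theta+c)}{2}=\frac{q}{2}+\frac12\sum_{c\in\fq}\eta\bigl(c^2+\mathrm{Tr}(\theta)\,c+N(\theta)\bigr).
\]
Because the minimal polynomial of $\theta$ over $\fq$ is irreducible, the quadratic $c^2+\mathrm{Tr}(\theta)c+N(\theta)$ has nonzero discriminant, so the standard quadratic character sum evaluates to $-1$; hence $e_B=\tfrac{q-1}{2}$ exactly and $|E(G)|=\tfrac12\bigl(q^3-2q+1\bigr)$, with exactly $q-1$ vertices of degree $q-1$.

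Now I would delete a set $S$ of $q+1$ vertices, reaching $q^2-q-2$ vertices; the resulting graph is still $C_4$-free, and counting removed edges gives
\[
 |E(G-S)|=\tfrac12 q^3-q^2-2q+\tfrac12+\ell_S+e(S),
\]
where $e(S)$ is the number of edges inside $S$ and $\ell_S$ the number of degree-$(q-1)$ vertices in $S$. Thus the theorem reduces to exhibiting $S$ with $e(S)+\ell_S\ge 2q-O(q^{3/4})$, that is, an induced subgraph on $q+1$ vertices that is markedly denser than average.

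The hard part is precisely this choice of $S$. A closed neighbourhood $\{v\}\cup N(v)$ yields only $e(S)\le\tfrac32 q$, because $C_4$-freeness forces the neighbours of $v$ to span a matching, and this merely reproduces the Abreu--Balbuena--Labbate bound; the natural algebraic candidate, a coset of the cyclic subgroup of order $q+1$, is worse still, since such a coset behaves like a conic and each vertex has at most two neighbours inside it, giving only about $q$ internal edges. I would therefore search for $S$ whose sumset $S+S$ meets $B$ with total multiplicity about $2q$, so that $B$-elements are represented roughly twice as often as the global average of one. Establishing that such an $S$ exists is an additive-combinatorial statement about $B$, and the deviation of the relevant representation counts from their average is exactly the quantity that a Weil-type character-sum estimate controls to within $O(q^{3/4})$; that estimate is what produces the error term in the theorem and is the step I expect to be the main obstacle.
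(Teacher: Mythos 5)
Your construction, your $C_4$-freeness argument, and your edge accounting all match the paper's, and your reduction is correct: deleting a $(q+1)$-set $S$ leaves $\frac{1}{2}q^3 - q^2 - 2q + \frac{1}{2} + \ell_S + e(S)$ edges, so everything hinges on producing an $S$ with $\ell_S + e(S) \geq 2q - O(q^{3/4})$. Your exact count of the degree-$(q-1)$ vertices is correct and is in fact sharper than what the paper uses: the paper invokes Lindstr\"{o}m's equidistribution theorem to get $q + O(q^{3/4})$ absolute points, while your norm/quadratic-character computation gives exactly $q-1$ (and, fed into the paper's argument, would even improve the stated error term to $O(\sqrt{q})$).

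However, the proposal stops exactly where the theorem's real content begins: you never exhibit the set $S$, and you flag this step yourself as ``the main obstacle.'' Moreover, the route you sketch is unlikely to succeed as described. Weil-type equidistribution estimates bound the \emph{deviation} of representation counts from their average, i.e.\ they show that structured sets span close to the average number of internal edges (about $q/2$ for a typical $(q+1)$-set); they cannot by themselves produce a set whose internal edge count exceeds that average by a factor of roughly four, which is what your reduction requires. The paper's solution is combinatorial and uses the absolute points themselves. It proves (i) that $A - A$ is all of $\intq$ except the nonzero multiples of $q+1$ (Lemma~\ref{lemma4}), whence any two absolute points $i,j$ with $j \neq i + \frac{q^2-1}{2}$ have a common neighbor, necessarily unique by $C_4$-freeness (Lemma~\ref{lemma6}); and (ii) that no vertex is adjacent to three absolute points (Lemma~\ref{at most 2}), which rests on a separate algebraic lemma about the Bose--Chowla set (Lemma~\ref{bclemma}). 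Granting these, one takes $k = \lfloor \frac{1}{2}\sqrt{8q+9} - \frac{1}{2} \rfloor$ pairwise non-antipodal absolute points together with their $\binom{k}{2}$ pairwise common neighbors, which are distinct by (ii); padding arbitrarily to $q+1$ vertices gives a set with $\ell_S + e(S) \geq k + 2\binom{k}{2} \geq 2q - O(\sqrt{q})$. This friendship-graph-like configuration is the missing idea; without it, or some substitute construction of a $(q+1)$-set roughly four times denser than average, your proposal does not prove the theorem.
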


We will construct graphs without 4-cycles using the Sidon sets constructed by Bose and Chowla \cite{bc}.  Let $\Gamma$ be an abelian group.  A set $A \subset \Gamma$ is a \emph{Sidon set} if whenever $a+b = c+d$ with $a,b,c,d \in A$, the pair $(a,b)$ is a permutation of $(c,d)$.  Sidon sets are well studied objects in combinatorial number theory and for more on Sidon sets we recommend O'Bryant's survey \cite{o}.   

Let $q$ be a prime power and $\theta$ be a generator of the multiplicative group $\mathbb{F}_{q^2}^*$ where 
$\mathbb{F}_{q^2}^*$ is the nonzero elements of the finite field $\mathbb{F}_{q^2}$.  
Bose and Chowla proved \cite{bc} that  
\[
A(q , \theta ) := \{ a \in \mathbb{Z}_{q^2 - 1} : \theta^a - \theta \in \fq \}
\]
is a Sidon set in the group $\mathbb{Z}_{q^2 - 1}$.  

\begin{definition}\label{def}
Let $q$ be a prime power and $\theta$ be a generator of the multiplicative group $\mathbb{F}_{q^2}^*$.  The graph $G_{q, \theta }$ is the graph with vertex set $\mathbb{Z}_{q^2 - 1}$ and two distinct vertices $i$ and $j$ are adjacent if and only if $i + j  = a$ for some 
$a \in A(q, \theta )$.  
\end{definition}

It is known that Sidon sets can be used to construct graphs without 4-cycles.  We will prove a result about the Bose-Chowla Sidon sets (see Lemma~\ref{bclemma}) that helps us find a subgraph of $G_{q, \theta}$ with $q+1$ vertices that contains many edges.  We remove this subgraph to obtain a graph with $q^2  - q - 2$ vertices and at least 
$\frac{1}{2} q^3 - q^2 - O( q^{3/4} )$ edges.  In addition to providing examples of graphs with no 4-cycles, the graphs $G_{q, \theta}$ have been used to solve other extremal problems (see \cite{fan}).  
 
We would like to remark that we could have defined $G_{q, \theta}$ as a polarity graph in the following way.  Let $ \mathcal{P} = \mathbb{Z}_{q^2 - 1}$ and let $\mathcal{L}$ be the set of $q^2  -1$ translates of $A(q , \theta)$.  That is, 
$\mathcal{L} = \{ A_1 , A_2 , \dots  , A_{q^2 - 1} \}$ where $A_i := A(q , \theta ) + i$.  This defines a geometry in the obvious way; $i \in \points$ is incident to $A_j \in \lines$ if and only if $i \in A_j$.  We define a polarity by $\pi (i ) = A_{q^2  - 1 - i}$ for all $i \in \mathcal{P}$, and $\pi ( A_i) = q^2 - 1 - i$ for all $A_i \in \mathcal{L}$.  The fact that $\pi$ is a polarity can be checked directly.  We choose to use Definition~\ref{def} as it is more convenient for our argument.


\section{Proof of Theorem~\ref{main}}
 
In this section we fix an odd prime power $q$ and a generator $\theta$ of the multiplicative group $\mathbb{F}_{q^2}^*$.  
We write $A$ for the Sidon set $A( q, \theta )$ in $\mathbb{Z}_{q^2 - 1}$ and observe that $|A|  = q$.  All of our manipulations will be done in the group $\mathbb{Z}_{q^2 - 1}$ or in the finite field $\mathbb{F}_{q^2}$.  If it is not clear from the context we will state which algebraic structure we are working in.      

The first two lemmas are known.  We present proofs for completeness.  

\begin{lemma}\label{lemma1}
The graph $G_{q , \theta}$ does not contain a 4-cycle.
\end{lemma}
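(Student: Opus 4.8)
The plan is to proceed by contradiction and reduce the existence of a $4$-cycle to a single additive relation among four elements of $A$, at which point the Sidon property finishes the argument. First I would record the elementary observation that a $4$-cycle in any graph is the same thing as a pair of distinct vertices having two distinct common neighbours. So suppose $x, y \in \mathbb{Z}_{q^2-1}$ are distinct and both adjacent to two distinct common neighbours $u$ and $v$. By Definition~\ref{def} this yields four elements $a_1, a_2, a_3, a_4 \in A$ with
\[
x + u = a_1, \qquad y + u = a_2, \qquad x + v = a_3, \qquad y + v = a_4,
\]
all read in the group $\mathbb{Z}_{q^2-1}$.

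Next I would eliminate the vertices $x$ and $y$ by differencing the equations that share a common neighbour. Subtracting gives $a_1 - a_3 = (x+u) - (x+v) = u - v$ and, identically, $a_2 - a_4 = u - v$, so that $a_1 - a_3 = a_2 - a_4$. Rearranging produces the sum relation $a_1 + a_4 = a_2 + a_3$ with all four terms in $A$. This is precisely the hypothesis of the Sidon condition, so $(a_1, a_4)$ must be a permutation of $(a_2, a_3)$: either $a_1 = a_2$ and $a_4 = a_3$, or $a_1 = a_3$ and $a_4 = a_2$. In the first branch $a_1 = a_2$ forces $x + u = y + u$, hence $x = y$; in the second branch $a_1 = a_3$ forces $x + u = x + v$, hence $u = v$. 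Either conclusion contradicts the distinctness of the cycle's vertices, so no $4$-cycle exists.

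I expect there to be no serious obstacle here, since all the mathematical weight is carried by the already-established Sidon property of $A(q,\theta)$. The only points demanding care are organisational: phrasing the $4$-cycle as two vertices with two common neighbours so that the correct differences telescope, and noting that every step uses only addition and subtraction in $\mathbb{Z}_{q^2-1}$. In particular, although $q$ is odd so that $\mathbb{Z}_{q^2-1}$ has even order, the argument never divides by $2$, so this parity plays no role.
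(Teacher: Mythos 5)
Your proof is correct and is essentially the paper's own argument: the paper labels the cycle $ijkl$, adds the sums over opposite edges to get $a+c=b+d$, and applies the Sidon property to force two vertices to coincide, which is exactly the relation $a_1+a_4=a_2+a_3$ you obtain by differencing. The only difference is cosmetic bookkeeping (viewing the $4$-cycle as two vertices with two common neighbours rather than as a labelled cycle), so nothing further is needed.
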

\begin{proof}
Suppose $ijkl$ is a 4-cycle in $G_{q , \theta}$.  There are elements $a,b,c,d \in A$ such that 
$i+ j = a$, $j + k = b$, $k + l = c$, and $l + i = d$.  This implies 
\[
a + c = b + d.
\]
Since $A$ is a Sidon set, $(a,c)$ is a permutation of $(b,d)$.  If $a = b$ then $i + j = j + k$ so $i = k$.  
If $a = d$ then $i + j = l + i$ so $j=l$.  In either case we have a contradiction thus $G_{q , \theta}$ does not contain a 4-cycle.   
\end{proof}

\begin{lemma}\label{lemma4}
If $A - A := \{ a - b : a , b \in A \}$ then  
\[
A - A = \intq \backslash \{ q + 1 , 2(q+1) , 3(q+1) , \dots , (q -2)( q + 1) \}.
\]
\end{lemma}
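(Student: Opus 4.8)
The plan is to combine a counting argument with a short direct verification, rather than to characterize the difference set residue by residue. First I would exploit the fact, established earlier, that $A$ is a Sidon set. If $a - b = c - d$ with $a,b,c,d \in A$ and $a \neq b$, then $a + d = b + c$, so the Sidon property forces $(a,b) = (c,d)$; hence the $q(q-1)$ nonzero differences coming from ordered distinct pairs are all distinct. Together with $0 \in A - A$ this gives $|A - A| = q^2 - q + 1$, so the complement $\intq \backslash (A - A)$ has exactly $(q^2 - 1) - (q^2 - q + 1) = q - 2$ elements. Since the set $S = \{ q+1 , 2(q+1) , \dots , (q-2)(q+1) \}$ also has exactly $q - 2$ elements (these are distinct and nonzero in $\intq$, because $k(q+1) \equiv k'(q+1)$ would force $k \equiv k' \pmod{q-1}$), it suffices to prove the single containment $S \cap (A - A) = \emptyset$; equality of the two sets then follows by cardinality.

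To analyze differences I would pass to the field $\mathbb{F}_{q^2}$. For $a \in A$ write $c_a := \theta^a - \theta \in \fq$, which is legitimate by the definition of $A(q , \theta)$. Then for $a , b \in A$ we have
\[
\theta^{a - b} = \frac{\theta^a}{\theta^b} = \frac{\theta + c_a}{\theta + c_b},
\]
so the statement $d \in A - A$ translates into the assertion that $\theta^d$ is a ratio of two elements of the coset $\theta + \fq$.

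Now I would rule out the elements of $S$. Because $\theta$ generates $\mathbb{F}_{q^2}^*$, the subgroup $\fq^*$ of order $q-1$ is exactly $\langle \theta^{q+1} \rangle$, so for $1 \le k \le q-2$ the element $\lambda := \theta^{k(q+1)}$ lies in $\fq^*$ and satisfies $\lambda \neq 1$. If $k(q+1) \in A - A$, then by the displayed identity $\lambda = (\theta + c_a)/(\theta + c_b)$ for some $c_a , c_b \in \fq$, whence $(1 - \lambda)\theta = \lambda c_b - c_a \in \fq$. Since $\lambda \neq 1$ this yields $\theta \in \fq$, contradicting the fact that $\theta$ has order $q^2 - 1 > q - 1$. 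Hence no element of $S$ is a difference, which is precisely the containment needed.

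The main obstacle here is conceptual rather than computational: recognizing that the Sidon property already pins down the \emph{size} of the complement, so that one only needs the easy direction (that the listed elements are excluded) and can sidestep the more delicate task of showing directly that \emph{every} other residue actually occurs as a difference. The reduction to ratios in the coset $\theta + \fq$ and the identification $\fq^* = \langle \theta^{q+1} \rangle$ are the two ingredients that make the avoidance step immediate, and the cardinality bookkeeping then upgrades this one-sided containment to the claimed equality.
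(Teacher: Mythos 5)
Your proof is correct and takes essentially the same approach as the paper: both establish the containment $\{q+1,\dots,(q-2)(q+1)\} \cap (A-A) = \emptyset$ by writing $\theta^{a-b}$ as a ratio of elements of the coset $\theta + \fq$ and invoking the linear independence of $1$ and $\theta$ over $\fq$ (the paper concludes the scalar must equal $1$ and hence $a=b$, while you conclude $\theta \in \fq$ -- two faces of the same dichotomy), and both then upgrade the one-sided containment to equality by the Sidon-set count $|A-A| = q(q-1)+1$.
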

\begin{proof}
Suppose $s(q + 1) \in A - A$ for some $1 \leq s \leq q -2$.  Write $s(q+1) = a - b$ where $a,b \in A$ and $a \neq b$.  
We have for some $\alpha , \beta \in \mathbb{F}_q$, 
\[
\theta^{ s ( q + 1) } = \theta^{ a - b} = \theta^a \theta^{-b} = ( \theta + \alpha ) ( \theta + \beta)^{-1}.
\]
From this we obtain 
\[
\theta + \alpha = ( \theta + \beta )( \theta^{q+1})^s
\]
but $\theta^{q+1} \in \mathbb{F}_q$ so $\theta + \alpha = ( \theta + \beta ) \gamma$ for some $\gamma \in 
\mathbb{F}_q$.  Since $\theta$ does not satisfy a nontrivial linear relation over $\mathbb{F}_q$ we must have 
$\gamma = 1$ hence $\alpha = \beta$ (in $\mathbb{F}_{q^2}$) so $a = b$ (in $\intq$).  From this we get $s(q+1) = 0$ which contradicts the fact that $1 \leq s \leq q - 2$.  This shows that 
\[
(A - A) \cap \{ q + 1 , 2(q + 1) , \dots , ( q- 2)(q + 1) \} = \emptyset.
\]
Since $A$ is a Sidon set, $|A - A| = q(q-1) + 1$ which is precisely the number of elements in the set
\[
\mathbb{Z}_{q^2 - 1} \backslash \{ q+1 , 2(q+1 ) , \dots , (q - 2) ( q +1) \}
\]
and this completes the proof of the lemma.    
\end{proof}

\bigskip

Let $i$ be a vertex in $G_{q , \theta}$.  If $i + i \in A$ then the degree of $i$ is $q-1$.  If $i + i \notin A$ then the degree of $i$ is $q$.  We call a vertex of degree $q-1$ an \emph{absolute point}.  

\begin{lemma}\label{lemma5}
Distinct vertices $i$ and $j$ in $G_{q , \theta}$ have a common neighbor if and only if 
$i  - j \in (A - A) \backslash \{ 0 \}$.
\end{lemma}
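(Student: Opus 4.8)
The plan is to prove both directions from the single identity $i - j = (i+k) - (j+k)$, letting the Sidon property of $A$ control the common neighbor. The forward direction is immediate: if $k$ is a common neighbor of the distinct vertices $i$ and $j$, then $k \neq i$, $k \neq j$, and there exist $a, b \in A$ with $i + k = a$ and $j + k = b$; subtracting gives $i - j = a - b$, and since $i \neq j$ we have $a \neq b$, so $i - j \in (A - A) \setminus \{0\}$. This uses nothing beyond the definition of adjacency.

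For the converse I would begin with $i - j \in (A - A)\setminus\{0\}$ and fix a representation $i - j = a - b$ with $a, b \in A$; as $i \neq j$ we get $a \neq b$, and because $A$ is a Sidon set this representation of the nonzero difference is unique. The natural candidate for a common neighbor is then $k := a - i$, which equals $b - j$ as well (the two expressions agree exactly because $a - b = i - j$). By construction $i + k = a \in A$ and $j + k = b \in A$, so $k$ is adjacent to both $i$ and $j$ as soon as we know that $k$ is a genuinely different vertex.

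The step I expect to be the main obstacle is precisely this distinctness, i.e.\ checking $k \neq i$ and $k \neq j$. Unwinding the definitions, $k = i$ is equivalent to $a = i + i \in A$ and $k = j$ is equivalent to $b = j + j \in A$; in other words the candidate neighbor collapses onto $i$ or $j$ exactly when that vertex is an \emph{absolute point} adjacent to the other. So the real content of the converse is an analysis of these absolute-point configurations: one must either show that they do not arise for the pairs in question, or absorb them through the loop at an absolute point in the polarity-graph description of $G_{q,\theta}$ recorded earlier. Away from this degenerate case the equivalence falls straight out of the uniqueness of Sidon representations, so the Sidon property of $A$ is doing essentially all of the work.
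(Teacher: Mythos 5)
Your proposal follows the paper's proof exactly in both directions: the forward implication by subtracting the two adjacency equations, and the converse via the candidate $k = a - i = b - j$. Where you differ is that you flag the distinctness check $k \neq i$, $k \neq j$ as the remaining obstacle and leave it open, whereas the paper's own proof simply ignores it: it writes ``Let $k = a-i$. Then $k + i = a$ so $k$ is adjacent to $i$'' without verifying that $k$ and $i$ are distinct, which Definition~\ref{def} requires for adjacency. So, judged as a complete proof, your attempt has a gap --- but it is the same gap that is present, unacknowledged, in the paper.

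Moreover, the obstacle you identified cannot be argued away, because for the loopless graph of Definition~\ref{def} the lemma as stated is \emph{false} in exactly the configuration you describe. Suppose $i$ is an absolute point, $2i = a \in A$, and $j$ is any neighbor of $i$, say $i + j = b \in A$ with $j \neq i$ (such pairs exist, since absolute points exist and have degree $q-1$). Then $i - j = a - b \in (A-A)\setminus\{0\}$; yet if $k$ were a common neighbor, writing $i + k = c$ and $j + k = d$ gives $c + b = d + a$, and the Sidon property together with $a \neq b$ forces $c = a$, i.e.\ $k = i$, which is excluded. So this pair has no common neighbor. What rescues the paper is that the converse direction is only ever applied (in Lemma~\ref{lemma6}) to pairs of \emph{distinct absolute points}, and there the degenerate case cannot occur: if $2i$, $2j$, and $i+j$ all lay in $A$, then $2i + 2j = (i+j) + (i+j)$ and the Sidon property would give $2i = i+j$, i.e.\ $i = j$; hence distinct absolute points are never adjacent, so the candidate $k$ differs from both endpoints and your argument closes. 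To make the lemma itself correct one must either add such a hypothesis (neither vertex is an absolute point adjacent to the other --- enough for the paper's application) or adopt the convention of loops at absolute points, the second escape route you mention. Your instinct here is more careful than the source.
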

\begin{proof}
First suppose $i$ and $j$ are distinct vertices that have a common neighbor $k$.  Then $i + k = a$ and $k + j =b$ for some $a,b \in A$ so $i - j = ( a - k ) - (b - k ) = a - b$.  Since $i \neq j$, we get that $a -b \neq 0$.  

Now suppose $i - j = a - b$ for some $a,b \in A$ with $a \neq b$.  Let $k = a - i$.  Then $k + i = a$ so $k$ is adjacent to $i$.  Also, 
$k = a - i = b - j $ so $k + j = b$ and $k$ is adjacent to $j$.  
\end{proof}

\begin{lemma}\label{lemma2}
If $i$ is an absolute point then $i + \frac{q^2 - 1}{2}$ is also an absolute point.
\end{lemma}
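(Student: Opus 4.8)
The plan is to unwind the definition of absolute point and exploit the fact that doubling the shift $\frac{q^2-1}{2}$ returns the identity element of the group $\mathbb{Z}_{q^2-1}$. Recall from the discussion preceding Lemma~\ref{lemma5} that a vertex $i$ is an absolute point precisely when $i+i\in A$, that is, when $2i\in A$ where the addition is taken in $\mathbb{Z}_{q^2-1}$.

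First I would set $j = i + \frac{q^2-1}{2}$, observing that since $q$ is odd the integer $q^2-1$ is even, so $\frac{q^2-1}{2}$ is a bona fide element of $\mathbb{Z}_{q^2-1}$ and $j$ is well defined (and indeed distinct from $i$, since $0 < \frac{q^2-1}{2} < q^2-1$). The single computation that drives the entire argument is
\[
2j = 2i + 2\cdot\frac{q^2-1}{2} = 2i + (q^2-1) \equiv 2i \pmod{q^2-1},
\]
which holds because $q^2-1$ is the order of the group and hence is congruent to $0$.

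Having established that $2j = 2i$ in $\mathbb{Z}_{q^2-1}$, the conclusion is immediate: since $i$ is an absolute point we have $2i\in A$, and therefore $2j = 2i \in A$, so $j = i + \frac{q^2-1}{2}$ is an absolute point as well. There is no real obstacle to overcome here; the only point that requires care is that the shift must be interpreted inside the cyclic group $\mathbb{Z}_{q^2-1}$ rather than in $\mathbb{Z}$, so that $q^2-1$ collapses to $0$ and the doubling cancels the shift exactly.
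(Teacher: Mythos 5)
Your proof is correct and is exactly the argument in the paper: doubling the shifted vertex gives $2\bigl(i + \frac{q^2-1}{2}\bigr) = 2i + (q^2-1) \equiv 2i \pmod{q^2-1}$, so the shift cancels and $2i \in A$ carries over. The paper states this in a single line; your write-up merely spells out the same computation in more detail.
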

\begin{proof}
If $2i = a$ for some $a \in A$ then $2(i+ \frac{q^2-1}{2} ) = 2i = a$.  
\end{proof}

\begin{lemma}\label{lemma6}
Let $i$ and $j$ be two distinct absolute points of $G_{q, \theta }$.  
If $i \neq j + \frac{q^2 - 1}{2}$ then $i$ and $j$ have a common neighbor and if $i = j+ \frac{q^2 - 1}{2}$ then $i$ and $j$ do not have a common neighbor.  
\end{lemma}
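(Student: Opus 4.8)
The plan is to reduce everything to the two structural facts already in hand. By Lemma~\ref{lemma5}, distinct vertices $i$ and $j$ have a common neighbor precisely when $i-j \in (A-A)\backslash\{0\}$, and since $i \neq j$ this is the same as $i - j \in A - A$. By Lemma~\ref{lemma4}, $A - A$ is all of $\intq$ with the multiples $\{q+1, 2(q+1), \dots, (q-2)(q+1)\}$ deleted. So the entire lemma is equivalent to the following claim: for distinct absolute points $i,j$, we have $i - j \notin A - A$ if and only if $i - j = \frac{q^2-1}{2}$.

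For the easy direction I would observe that $\frac{q^2-1}{2} = \frac{q-1}{2}(q+1)$, and since $q$ is an odd prime power (so $q \geq 3$) the integer $s = \frac{q-1}{2}$ satisfies $1 \leq s \leq q-2$. Hence $\frac{q^2-1}{2}$ is one of the forbidden multiples of $q+1$ excluded from $A-A$ by Lemma~\ref{lemma4}. Thus if $i = j + \frac{q^2-1}{2}$ then $i - j \notin A-A$, and by Lemma~\ref{lemma5} the two points share no neighbor.

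For the converse --- the main content --- suppose $i,j$ are distinct absolute points with $i - j \notin A-A$. By Lemma~\ref{lemma4} we may write $i - j = s(q+1)$ for some $1 \leq s \leq q-2$. Being absolute points means $2i = a$ and $2j = b$ for some $a,b \in A$, so $a - b = 2(i-j) = 2s(q+1)$ in $\intq$. Passing to $\mathbb{F}_{q^2}$ and writing $\theta^a = \theta + \alpha$, $\theta^b = \theta + \beta$ with $\alpha,\beta \in \fq$, I would compute $\theta^{a-b} = (\theta^{q+1})^{2s}$, which lies in $\fq$ because $\theta^{q+1}$ is fixed by the Frobenius map $x \mapsto x^q$. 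The same linear-independence argument used in Lemma~\ref{lemma4} --- that $\theta$ satisfies no nontrivial $\fq$-linear relation --- then forces the scalar $(\theta^{q+1})^{2s}$ to equal $1$. Since $\theta^{q+1}$ has order $q-1$ in $\fq^*$, this gives $(q-1) \mid 2s$; as $2 \leq 2s \leq 2q-4$, the only admissible value is $2s = q-1$, i.e.\ $s = \frac{q-1}{2}$ and $i - j = \frac{q^2-1}{2}$, as required.

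The main obstacle is this converse, and specifically the translation of the additive condition $i - j = s(q+1)$ into a multiplicative identity in $\mathbb{F}_{q^2}$ and the extraction of the divisibility $(q-1)\mid 2s$. The parity of $q$ is exactly what makes $s = \frac{q-1}{2}$ an admissible index while excluding every other forbidden multiple; everything else is bookkeeping with Lemmas~\ref{lemma4} and~\ref{lemma5}.
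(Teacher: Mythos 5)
Your proposal is correct and takes essentially the same approach as the paper: both reduce via Lemmas~\ref{lemma4} and~\ref{lemma5} to the case $i - j = s(q+1)$ with $1 \leq s \leq q-2$, use the absolute-point equations $2i = a$, $2j = b$ to get $a - b = 2s(q+1)$, and then rule out every $s$ except $\frac{q-1}{2}$. The only real difference is cosmetic: where the paper invokes Lemma~\ref{lemma4} a second time (applied to $a-b$, a multiple of $q+1$ lying in $A-A$) to force $a = b$ and then solves $2i \equiv 2j \pmod{q^2-1}$, you re-derive the same fact inside $\mathbb{F}_{q^2}$ via the norm computation $(\theta^{q+1})^{2s} = 1$, hence $(q-1) \mid 2s$ --- sound, but it duplicates work already packaged in Lemma~\ref{lemma4}.
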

\begin{proof}
By Lemmas~\ref{lemma4} and~\ref{lemma5}, $i$ and $j$ have a common neighbor unless $ i   -  j = s(q + 1)$ for some 
$1 \leq s \leq q -2$.  Since $i$ and $j$ are absolute points, there exists elements $a,b \in A$ such that 
$2i  = a$ and $2j = b$ thus $a - b = 2s (q  + 1)$.  By Lemma~\ref{lemma4}, it must be the case that $a = b$ so 
$2i = 2j$.  The solutions to $2x \equiv 2y ( \textup{mod}~q^2 - 1)$ are $x = y$ and $x = y + \frac{q^2  - 1}{2}$ hence 
$i = j$ or $i = j + \frac{q^2 - 1}{2}$.  Thus $i$ and $j$ will have a common neighbor whenever they are distinct absolute points with $i \neq j + \frac{q^2 - 1}{2}$ and will not have a common neighbor when $i = j + \frac{q^2 - 1}{2}$.    
\end{proof}

\begin{lemma}\label{bclemma}
Let $\{a_1,a_2,a_3 \}$ and $\{b_1,b_2 , b_3 \}$ be subsets of $A$ with $a_1,a_2$, and $a_3$ all distinct and $b_1, b_2$, and $b_3$ all distinct.  If  
\[
2b_1 - a_1 = 2b_2 - a_2 = 2b_3 - a_3
\]
then two of the ordered pairs $(a_1,b_1)$, $(a_2, b_2)$, $(a_3, b_3)$ are equal.  
\end{lemma}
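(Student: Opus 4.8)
The plan is to reduce the statement to the non-existence of the configuration described. Since $a_1, a_2, a_3$ are pairwise distinct, the three ordered pairs $(a_1,b_1), (a_2,b_2), (a_3,b_3)$ have pairwise distinct first coordinates, so no two of them can be equal. Thus proving the lemma amounts to showing that the hypothesis $2b_1 - a_1 = 2b_2 - a_2 = 2b_3 - a_3$ simply cannot hold once the $a_i$ are distinct and the $b_i$ are distinct; I will assume such a configuration exists and derive a contradiction, after which the conclusion follows vacuously.

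First I would pass from $\intq$ to $\mathbb{F}_{q^2}$ using the defining property of the Bose--Chowla set. For each $a \in A$ we may write $\theta^a = \theta + \alpha$ with $\alpha \in \fq$, and the assignment $a \mapsto \alpha$ is a bijection between $A$ and $\fq$ because $\theta$ generates $\mathbb{F}_{q^2}^*$. Writing $\theta^{a_i} = \theta + \alpha_i$ and $\theta^{b_i} = \theta + \beta_i$, the distinctness of the $a_i$ (resp.\ of the $b_i$) forces the $\alpha_i$ (resp.\ the $\beta_i$) to be pairwise distinct elements of $\fq$. Letting $c := \theta^{2b_i - a_i}$ denote the common value of the three equal exponentials, each instance of the hypothesis becomes the identity $(\theta + \beta_i)^2 = c(\theta + \alpha_i)$ in $\mathbb{F}_{q^2}$, for $i = 1,2,3$ (note $\theta + \alpha_i \neq 0$ since $\theta \notin \fq$).

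Next I would eliminate $c$. Subtracting the $i$th and $j$th identities and factoring the difference of squares gives $(\beta_i - \beta_j)(2\theta + \beta_i + \beta_j) = c(\alpha_i - \alpha_j)$. Since the $\alpha_i$ are distinct, $\alpha_i - \alpha_j \neq 0$, so I can solve for $c$ and equate the expressions coming from the pairs $(1,2)$ and $(1,3)$. Because $\theta \notin \fq$, the set $\{1, \theta\}$ is an $\fq$-basis of $\mathbb{F}_{q^2}$, so I may compare the coefficients of $\theta$ and the constant terms separately. The $\theta$-coefficients yield $\frac{\beta_1 - \beta_2}{\alpha_1 - \alpha_2} = \frac{\beta_1 - \beta_3}{\alpha_1 - \alpha_3} =: s$, where I use that $q$ is odd so that $2$ is invertible in $\fq$, and the constant terms then give $s(\beta_1 + \beta_2) = s(\beta_1 + \beta_3)$.

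Finally I would extract the contradiction. If $s \neq 0$ the last equation forces $\beta_2 = \beta_3$, while if $s = 0$ the definition of $s$ forces $\beta_1 = \beta_2$; either way two of the distinct $\beta_i$ coincide, which is impossible. Hence no configuration satisfying the hypothesis exists, and the lemma holds. The main obstacle is really just setting up the translation to $\mathbb{F}_{q^2}$ correctly and justifying the coefficient comparison via the linear independence of $1$ and $\theta$ over $\fq$; once this is in place the algebra is routine, and the only arithmetic subtlety is the use of $q$ odd in order to divide by $2$.
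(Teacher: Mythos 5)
Your proof is correct, and it takes a genuinely different and substantially shorter route than the paper's. Both arguments begin identically, encoding the hypothesis in $\mathbb{F}_{q^2}$ via $\theta^{a_i} = \theta + \alpha_i$ and $\theta^{b_i} = \theta + \beta_i$ (the paper calls these $c_i$ and $d_i$), and both use that $q$ is odd; they diverge at the elimination step. The paper eliminates the common value by cross-multiplying: $2b_1 + a_2 = 2b_2 + a_1$ becomes the cubic identity $(\theta + d_1)^2(\theta + c_2) = (\theta + d_2)^2(\theta + c_1)$, which must be expanded and reduced modulo the quadratic relation $\theta^2 = \alpha\theta + \beta$ satisfied by $\theta$ over $\fq$; this yields a system of two quadratic equations in two $\fq$-unknowns, and showing that this system has at most two solutions costs a page of casework, including a preliminary normalization choosing $c_1$ with $c_1^2 - 3c_1\alpha + 3\beta \neq 0$. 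You instead name the common value $c = \theta^{2b_i - a_i} \in \mathbb{F}_{q^2}$, write the three relations $(\theta+\beta_i)^2 = c(\theta+\alpha_i)$, and eliminate $c$ by \emph{subtracting} in pairs: the difference-of-squares factorization cancels the $\theta^2$ terms, so every identity stays linear in $\theta$, the minimal polynomial of $\theta$ is never needed, and comparing coefficients in the $\fq$-basis $\{1,\theta\}$ immediately gives $s := \frac{\beta_1-\beta_2}{\alpha_1-\alpha_2} = \frac{\beta_1-\beta_3}{\alpha_1-\alpha_3}$ together with $s(\beta_2 - \beta_3) = 0$, and either alternative ($s=0$ or $s \neq 0$) forces two of the distinct $\beta_i$ to coincide. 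You also make explicit a logical point the paper leaves implicit: since the $a_i$ are distinct, the stated conclusion can never hold, so the lemma is equivalent to the unsatisfiability of its hypothesis, which is exactly the form in which it is invoked in Lemma~\ref{at most 2} --- so nothing is lost by proving it that way. The trade-off is mild: your subtraction trick requires momentarily treating $c$ as an unknown element of the big field $\mathbb{F}_{q^2}$, whereas the paper's cross-multiplication keeps all unknowns inside $\fq$ from the start; but the price the paper pays is the quadratic system and its attendant degenerate-case analysis, all of which your argument avoids.
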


The proof of Lemma~\ref{bclemma} is simple but it is not short.  For this reason we postpone the proof until after the proof of Theorem~\ref{main}.   

\begin{lemma}\label{at most 2}
Any vertex $j$ is adjacent to at most two absolute points.
\end{lemma}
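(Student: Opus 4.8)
The plan is to argue by contradiction. Suppose some vertex $j$ is adjacent to three distinct absolute points $i_1, i_2, i_3$. For each $k \in \{1,2,3\}$, adjacency to $j$ gives an element $a_k \in A$ with $i_k + j = a_k$, and the absolute-point condition gives $b_k \in A$ with $2 i_k = b_k$. The key algebraic observation is that eliminating $i_k$ produces
\[
2 a_k - b_k = 2(i_k + j) - 2 i_k = 2j \qquad (k = 1,2,3),
\]
so all three quantities $2a_k - b_k$ equal the single constant $2j$. This is precisely the shape of the hypothesis in Lemma~\ref{bclemma} once we interchange the roles of the doubled and undoubled symbols. Before invoking it, I would record the easy fact that $a_1, a_2, a_3$ are pairwise distinct: since $i_1, i_2, i_3$ are distinct and translation by $j$ is injective on $\mathbb{Z}_{q^2-1}$, the values $a_k = i_k + j$ are distinct.

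To apply Lemma~\ref{bclemma} I also need $b_1, b_2, b_3$ to be pairwise distinct, and this is exactly the point where the main subtlety arises: because $q$ is odd, $q^2 - 1$ is even, so the doubling map is two-to-one on $\mathbb{Z}_{q^2-1}$ and the $b_k = 2 i_k$ need not be distinct. I would therefore split into two cases. In the first case, $b_1, b_2, b_3$ are pairwise distinct; then Lemma~\ref{bclemma}, applied with each $b_k$ playing the role of its ``$a_i$'' and each $a_k$ playing the role of its ``$b_i$'', forces two of the ordered pairs $(b_k, a_k)$ to coincide. In particular two of the $a_k$ would be equal, contradicting the distinctness established above.

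In the remaining case two of the $b_k$ coincide, say $2 i_1 = 2 i_2$ with $i_1 \neq i_2$. The two solutions of $2x \equiv 2 i_1 \pmod{q^2 - 1}$ are $x = i_1$ and $x = i_1 + \frac{q^2 - 1}{2}$, so $i_2 = i_1 + \frac{q^2 - 1}{2}$. But then $i_1$ and $i_2$ are distinct absolute points with $i_1 = i_2 + \frac{q^2 - 1}{2}$, so Lemma~\ref{lemma6} asserts that they have no common neighbor—contradicting the fact that $j$ is adjacent to both. Since both cases are impossible, $j$ cannot be adjacent to three distinct absolute points, which proves the claim.

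The step I expect to be the main obstacle is the non-injectivity of doubling caused by the even order $q^2 - 1$: this is precisely why Lemma~\ref{bclemma} does not apply directly, and the degenerate case where the $b_k$ collide must be handled separately. The pleasant point is that this degenerate case is exactly the ``antipodal'' configuration $i_2 = i_1 + \frac{q^2 - 1}{2}$ already analyzed in Lemma~\ref{lemma6}, so the two lemmas dovetail to cover all possibilities.
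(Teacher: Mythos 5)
Your proof is correct and follows essentially the same route as the paper: both arguments establish that the three adjacency sums $i_k + j$ are distinct by injectivity of translation, handle the degenerate case $2i_k = 2i_l$ via the antipodal-pair statement of Lemma~\ref{lemma6}, and then invoke Lemma~\ref{bclemma} to force two coinciding pairs and reach a contradiction. The only difference is cosmetic: your $a_k$ and $b_k$ are the paper's $b_k$ and $a_k$, and you present the paper's sequential elimination as an explicit two-case split.
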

\begin{proof}
Suppose $j$ is a vertex of $G_{q, \theta}$ that is adjacent to three distinct absolute points $i_i, i_2$, and $i_3$.  
There exists elements $a_1,a_2,a_3,b_1,b_2,b_3 \in A$ such that 
\[
2i_k = a_k ~~ \mbox{and} ~~ i_k + j = b_k
\]
for $k=1,2,3$.  Since $i_1,i_2,i_3$ are all distinct, $b_1, b_2$, and $b_3$ must all be distinct.  If 
$a_k = a_l$ for some $1 \leq k < l \leq 3$ then $i_k = i_{l} + \frac{q^2 - 1}{2}$.  In this case, the vertices $i_k$ and $i_{l}$ are absolute points with a common neighbor but this is impossible by Lemma~\ref{lemma6}.  We conclude that 
$a_1, a_2$, and $a_3$ are all distinct.  For each $k$, we can write $i_k + j = b_k$ as 
$2j  = 2b_k - a_k$ so that 
\[
2b_1 - a_2 = 2b_2 - a_2 = 2b_3 - a_3.  
\]
By Lemma~\ref{bclemma}, $(a_k  , b_k) = (a_l , b_l)$ for some 
$1 \leq k < l \leq 3$ but we have already argued that $a_k$ and $a_l$ are distinct.  This gives the needed contradiction and completes the proof of the lemma.    
\end{proof}

\bigskip

\begin{proof}[Proof of Theorem~\ref{main}]
Let $P$ be the absolute points of $G_{q , \theta}$.  By 
Lemma~\ref{lemma2}, the absolute points come in pairs so we can write
\[
P = \{ i_1 , i_1 + \frac{q^2 - 1}{2} , i_2 , i_2 + \frac{q^2 - 1}{2} , \dots , i_t , i_t + \frac{q^2 - 1}{2} \}
\]
where $2t$ is the number of absolute points of $G$.  When $q$ is odd, $q^2 - 1$ is even and we can write 
$q^2 - 1 = 2^r m$ where $r \geq 1$ is an integer and $m$ is odd.  If $a \in A$ then the congruence 
\[
2x \equiv a ( \textup{mod}~2^rm )
\]
has no solution when $a$ is odd and two solutions if $a$ is even.  Therefore $t$ is exactly the number of even elements of $A$ when we view $A$ as a subset of $\mathbb{Z}$.  Lindstr\"{o}m \cite{l2} proved that dense Sidon sets are close to evenly distributed among residue classes.  In particular, the results of \cite{l2} imply that 
\begin{equation}\label{eqt}
t = \frac{q}{2} + O( q^{3/4})
\end{equation}
so we know that we have $q + O (q^{3/4})$ absolute points in $G_{q,\theta}$.    
The number of vertices of $G_{q , \theta}$ is $q^2 - 1$ and the number of edges of $G_{q , \theta}$ is 
\begin{eqnarray*}
e(G)  =  \frac{1}{2} \left(  q(q^2 - 1 - 2t) + ( q-1)(2t) \right) = \frac{1}{2}q^3 - \frac{1}{2}q - t.
\end{eqnarray*}

Let $S \subset V(G_{q, \theta})$ with $|S| = q+1$ and let $t_S$ be the number of absolute points in $S$.  The graph 
$G_{q, \theta} \backslash S$ has $q^2 - q - 2$ vertices and 
\begin{equation}\label{equation 1}
\frac{1}{2}q^3 - \frac{1}{2} q - t - e(S) - e(S , \overline{S} )
\end{equation}
edges.  Here $e(S , \overline{S})$ is the number of edges of $G_{q , \theta}$ with exactly one endpoint in $S$.    
We can rewrite $e(S) + e(S , \overline{S})$ as 
\[
e(S) + e(S , \overline{S} ) = \sum_{i \in S} d(i) - e(S) = ( q + 1 - t_S ) q + t_S (q - 1) - e(S) = q^2 + q - t_S - e(S).
\]
By (\ref{equation 1}) we can write the number of edges of $G_{q , \theta} \backslash S$ as 
\begin{equation}\label{equation 2}
\frac{1}{2}q^3 - \frac{1}{2} q - t - ( q^2 + q - t_S - e(S) ) = \frac{1}{2}q^3 - q^2 - \frac{3}{2}q - t + t_S +  e(S).
\end{equation}

For any $1  \leq j_1 < j_2 \leq t$, the pair $i_{j_1}$ and $i_{j_2}$ of absolute points have a unique common neighbor by Lemmas~\ref{lemma6} and~\ref{lemma1}.  
Set $k = \lfloor \frac{1}{2} \sqrt{ 8 q + 9 }  - \frac{1}{2} \rfloor$ and note that for large enough $q$ we have $k \leq t$.  The integer $k$ is chosen so that it is as large as possible and still satisfies the inequality $\binom{k}{2} + k \leq q +1$.  
Let $S_1 = \{ i_1 , \dots , i_k \}$.  For each pair $1 \leq j_1 < j_2 \leq k$, let $x_{j_1 , j_2}$ be the unique common neighbor of the absolute points $i_{j_1}$ and $i_{j_2}$.  Let 
$S_2 = \{ x_{j_1 , j_2} : 1 \leq j_1 < j_2 \leq k \}$.  By Lemma~\ref{at most 2}, $S_2$ consists of $\binom{k}{2}$ distinct vertices.
A short calculation shows that 
$\binom{k}{2} + k \geq q - O ( \sqrt{q} )$.  Let $S_3$ be a set of $q+1 - \binom{k}{2} - k$ vertices chosen arbitrarily from 
$V( G_{q,\theta} ) \backslash ( S_1 \cup S_2)$.  Let $S$ be the subgraph of $G_{q, \theta}$ induced by the vertices
$S_1 \cup S_2 \cup S_3$.  By construction, $S$ has $q+1$ vertices and 
at least $2 \binom{k}{2}$ edges so 
\[
t_S + e(S) \geq k + 2 \binom{k}{2} \geq 2q  - O ( \sqrt{q} ).  
\]
By (\ref{eqt}) and (\ref{equation 2}), removing the vertices of $S$ from $G_{q , \theta}$ leaves a graph with $q^2 - q - 2$ vertices and at least 
\[
\frac{1}{2} q^3 - q^2 - 2q  + 2q - O ( q^{3/4} ) = \frac{1}{2}q^3 - q^2 - O(q^{3/4}).
\]
edges.  
\end{proof}  

\bigskip

Now we return to the proof of Lemma~\ref{bclemma}.  

\begin{proof}[Proof of Lemma~\ref{bclemma}]
Let $\{a_1,a_2,a_3 \} , \{b_1,b_2,b_3 \} \subset A$ with $a_1,a_2$, and $a_3$ all distinct, and $b_1,b_2$, and $b_3$ all distinct.  
Since $a_k,b_k \in A$, there exists elements $c_k , d_k \in \mathbb{F}_q$ such that 
\[
\theta^{a_k}  = \theta + c_k ~~ \mbox{and} ~~ \theta^{b_k} = \theta + d_k
\]
for $k=1,2,3$.  Observe that $c_1,c_2$, and $c_3$ are all distinct and so are $d_1,d_2$, and $d_3$. 

The generator $\theta$ satisfies a degree two polynomial over $\mathbb{F}_q$, say $\theta^2 = \alpha \theta + \beta$ where $\alpha , \beta \in \mathbb{F}_q$.  Since $\theta$ generates $\mathbb{F}_{q^2}^{*}$, it cannot be the case that $\alpha = 0$ and if $\beta  = 0$, then $\theta ( \theta - \alpha ) = 0$ which is impossible since $\theta \notin \mathbb{F}_q$.  The polynomial 
$X^2 - 3X + 3 \beta \in \mathbb{F}_q [X]$ has at most two roots in $\mathbb{F}_q$.  Without loss of generality, we may assume that 
\begin{equation}\label{equ 0}
c_1^2 - 3c_1 \alpha + 3 \beta \neq 0
\end{equation}
since $c_1,c_2$, and $c_3$ are all distinct.  This fact will be important towards the end of the proof.  

Consider the equation $2b_1 + a_2 = 2b_2 + a_1$.  We can rewrite this as 
\[
( \theta + d_1)^2 ( \theta + c_2) = ( \theta + d_2 )^2 ( \theta  + c_1).
\]
If we expand, use $\theta^2 = \alpha \theta + \beta$, and regroup we obtain 
\begin{eqnarray*}
~&~& \theta ( 2d_1 \alpha + c_2 \alpha + d_1^2 + 2d_1 c_2 )  + ( 2d_1 \beta + c_2 \beta + d_1^2 c_2) \\
~&=& \theta ( 2d_2 \alpha + c_1 \alpha  + d_2^2 + 2d_2 c_1) + ( 2d_2 \beta + c_1 \beta + d_2^2 c_1).
\end{eqnarray*}
These coefficients are all in $\mathbb{F}_q$ so we must have 
\begin{equation}\label{equ 1} 
2d_1 \alpha + c_2 \alpha + d_1^2 + 2d_1 c_2  = 2d_2 \alpha + c_1 \alpha  + d_2^2 + 2d_2 c_1
\end{equation}
and 
\begin{equation}\label{equ 2}
2d_1 \beta + c_2 \beta + d_1^2 c_2 = 2d_2 \beta + c_1 \beta + d_2^2 c_1.
\end{equation}
Similar arguments show that both (\ref{equ 1}) and (\ref{equ 2}) hold with $c_3$ replacing $c_2$ and $d_3$ replacing $d_2$.  
We view $c_1$ and $d_1$ as begin fixed and $(c_2,d_2)$ and $(c_3,d_3)$ as solutions to the system 
\begin{equation}\label{equ 3} 
2d_1 \alpha + X \alpha +  d_1^2 + 2d_1 X  = 2 Y  \alpha + c_1 \alpha  + Y^2 + 2Y c_1,
\end{equation}
\begin{equation}\label{equ 4}
2d_1 \beta + X \beta + d_1^2 X = 2Y \beta + c_1 \beta + Y^2 c_1.
\end{equation}
One solution is $(X,Y) = (c_1 , d_1)$.  If we can show that the system (\ref{equ 3}), (\ref{equ 4}) has at most two solutions then we are done as this forces two of the pairs $(c_1,d_1), (c_2,d_2),(c_3,d_3)$ to be the same and 
the pair $(c_k ,d_k)$ uniquely determines the pair $(a_k , b_k)$.  Multiply 
(\ref{equ 3}) by $c_1$ and then subtract (\ref{equ 4}) to eliminate $Y^2$ and obtain 
\begin{equation}\label{equ 5}
( 2 c_1 d_1 \alpha + c_1 d_1^2 + c_1 \beta - 2d_1 \beta - c_1^2 \alpha ) + X( \alpha c_1 + 2c_1 d_1 - \beta 
- d_1^2 ) = Y ( 2c_1^2 + 2c_1 \alpha - 2 \beta ).
\end{equation}
Next we subtract $\alpha$ times (\ref{equ 4}) from $\beta$ times (\ref{equ 3}) to get 
\begin{equation}\label{equ 6}
d_1^2 \beta + X ( 2 d_1 \beta - d_1^2 \alpha )  = Y^2 ( \beta - \alpha c_1  ) + Y( 2c_1 \beta).
\end{equation}
If we knew that the coefficient of $X$ was nonzero in (\ref{equ 5}) and $\beta  - \alpha c_1 \neq 0$ then we could easily deduce that there are at most two solutions $(X,Y)$.  Unfortunately we do not know this and so we have to work to overcome this obstacle.  

Suppose (\ref{equ 5}) is an equation where the coefficients of $X$ and $Y$ are both 0.  Then 
\[
2c_1^2 + 2c_1 \alpha - 2 \beta = 0 ~~ \mbox{and} ~~ \alpha c_1 + 2c_1 d_1 - \beta - d_1^2 = 0.
\]
Since $q$ is odd, the first equation can be rewritten as $c_1^2 + c_1 \alpha - \beta$.  Subtracting the second equation 
$c_1^2 + c_1 \alpha - \beta$ gives $c_1^2 - 2c_1 d_1 + d_1^2 =0$ hence $(c_1 - d_1)(c_1 + d_1)=0$.  

If $c_1 = d_1$ then $\theta^{a_1} = \theta + c_1 = \theta + d_1 = \theta^{b_1}$ so $a_1 = b_1$ (in $\mathbb{Z}_{q^2 - 1}$).  Using $2b_1 - a_1 = 2b_2 - a_2$ we get $b_1 + a_2 = b_2 + b_2$ so $b_1 = b_2$, a contradiction.  Assume $c_1 = - d_1$.  Then $c_1 \neq 0$ and $d_1 \neq 0$ otherwise $c_1 = d_1$ which we already know does not occur.  Since both coefficients of $X$ and $Y$ are 0 in (\ref{equ 5}) the constant term must also be 0 so, using $c_1 = -d_1$, 
\begin{eqnarray*}
0 & = & 2c_1 d_1 \alpha + c_1 d_1^2 + c_1 \beta - 2d_1 \beta - c_1^2 \alpha \\
& = & -3c_1^2 \alpha + c_1^3 + 3c_1 \beta \\
& = & c_1 ( c_1^2 - 3c_1 \alpha + 3 \beta ).
\end{eqnarray*}
By (\ref{equ 0}) this is impossible.  We conclude that at least one of the coefficients of $X$ or $Y$ in (\ref{equ 5}) must be nonzero.  

If the coefficient of $X$ in (\ref{equ 5}) is nonzero then we can write $X = \gamma_1 Y + \gamma_2$ for some $\gamma_1 , \gamma_2 \in \mathbb{F}_q$.  Substituting this equation into (\ref{equ 3}) gives a quadratic equation in $Y$ which has at most two solutions and $Y$ uniquely determines $X$ since $X = \gamma_1 Y + \gamma_2$ and we are done.  

Assume now that $\alpha c_1 + 2 c_1 d_1 - \beta - d_1^2 = 0$.  Then (\ref{equ 5}) gives a unique solution for $Y$.  Since 
$(X,Y) = (c_1 , d_1)$ is a solution we must have that all solutions to the system (\ref{equ 3}), (\ref{equ 4}) have $Y = d_1$.  Substituting into (\ref{equ 3}) and (\ref{equ 4}) we get 
\begin{eqnarray*}
X ( \alpha + 2d_1 ) = c_1 ( \alpha + 2d_1) \\
X ( \beta + d_1^2 ) = c_1 ( \beta + d_1^2 ).
\end{eqnarray*}
If $d_1 = 0$ then $X \alpha = c_1 \alpha$ and since $\alpha \neq 0$ we get $X = c_1$ are we are done.  

Assume $d_1 \neq 0$.  If either 
$\alpha + 2d_1$ or $\beta +d_1^2$ are nonzero then we are done.  Assume 
$\alpha + 2d_1 = \beta + d_1^2 = 0$.  
If we substitute $Y = d_1$ into (\ref{equ 6}) then we get 
\[
X d_1 ( 2 \beta - d_1 \alpha ) = d_1 c_1 ( 2 \beta - d_1 \alpha).
\]
Again, if $2 \beta - d_1 \alpha$ is nonzero we are done so assume $2 \beta - d_1 \alpha = 0$.  Using the three equations 
\[
\alpha + 2d_1 = 0 , ~ \beta + d_1^2 = 0 ~ , 2 \beta  - d_1 \alpha = 0
\]
we have 
\[
0 = 2 \beta - d_1 \alpha = 2( - d_1^2 ) - d_1 ( -4d_1) = 2d_1^2
\]
so $d_1 = 0$ giving the needed contradiction.

\end{proof}


\end{document}